\theoremstyle{plain} %text of this environment is typesetted in italics
\newtheorem{theorem}{\indent\sc Theorem}[section]
\newtheorem{corollary}[theorem]{\indent\sc Corollary}
\newtheorem{proposition}[theorem]{\indent\sc Proposition}
\theoremstyle{definition} %text of this environment is typesetted in roman letters
\newtheorem{remark}[theorem]{\indent\sc Remark}
\newtheorem{ex}[theorem]{\indent\sc Example}
\def\address#1#2{\begingroup
\noindent\parbox[t]{7.8cm}{%
\small{\scshape\ignorespaces#1}\par\vskip1ex
\noindent\small{\itshape E-mail address}%
\/: #2\par\vskip4ex}\hfill%
\endgroup}%
\title{\uppercase {The Bi-normal fields on spacelike surfaces in $\mathbb R_1^4$ }} %title of the paper
\author{ Dang Van Cuong
%
%\small{Dedicated to Professor Xxx Yyy on his sixtieth birthday} %dedication if necessary
\bigskip
\textsc{ }\footnote{Hue Geometry Group}}
\date{\today}
\begin{document}
\maketitle
\footnote{AMS
\textit{Mathematics Subject Classification}.
53C50, 53A35, 57R45.
}
\footnote{ %key words and phrases
\textit{Key words and phrases}. Lorentz-Minkowski space, spacelike surfaces, bi-normal fields, singularities of Gauss map, planar surfaces, umbilicity.}

\begin{abstract}
A normal field on a spacelike surface in $R^4_1$ is called bi-normal if $K^{\nu}$, the determinant of Weingarten map associated with $\nu$,  is zero. In this paper  we give a relationship between the spacelike pseudo-planar surfaces and spacelike pseudo-umbilical surfaces,  then  study the bi-normal fields on spacelike ruled surfaces and spacelike surfaces of revolution.
\end{abstract}
\setcounter{section}{-1}
\section{Introduction}
Let $\alpha:I\to \mathbb R^3$ be a bi-regular parametric curve. A long this curve, the vector field defined by
$$\textbf b=\frac{1}{|\alpha'\times \alpha''|}(\alpha'\times \alpha'') $$
is called the bi-normal field of $\alpha.$ A bi-normal vector can be seen as a direction whose the corresponding height function has a degenerate (non-Morse) critical point.\\
\indent Let $M$ be a regular surface in $\mathbb R_1^4$ (or $\mathbb R^4$) and $f_{\textbf v}$ be the height function on $M$ associated with a direction $\textbf v$. By analogy with the case of curves in $\mathbb R^3$, a direction $\textbf v$ is called a bi-normal direction  of $M$ at a point $p$ if the height function $f_{\textbf v}$ has a degenerate singularity at $p.$ The height function $f_{\textbf v}$ having a degenerate singularity means that its hessian is singular.\\
 \indent Given a normal field $\nu$ on $M$ and denoted by $S^{\nu}$ the shape operator associated with $\nu.$ The $\nu$-Gauss curvature $K^{\nu}$ of $M$ is defined by $K^{\nu}=\det S^{\nu}.$  The eigenvalues $k_1^{\nu}$ and $k_2^{\nu}$ of the shape operator $S^{\nu}$ are called $\nu$-principal curvatures. The $\nu$-mean curvature of $M$ is defined by
 $$H^{\nu}=\frac{1}{2}{\rm trace}S^{\nu}=\frac{1}{2}(k_1^{\nu}+k_2^{\nu}).$$
 A point $p\in M$ is called $\nu$-umbilic if $k_1^{\nu}(p) = k_2^{\nu}(p) = k$ and is called $\nu$-flat if $k_1^{\nu}(p) = k_2^{\nu}(p) = 0.$  If there exists a normal  field  $\nu$ on $M$ such that $M$ is $\nu$-umbilic ($\nu$-flat) then $M$ is called pseudo-umbilical (pseudo-flat) surface. $M$ is called umbilic if it is $\nu$-umbilic for all normal fields $\nu.$ $M$ is called maximal if $H^{\nu}=0$ for all normal fields $\nu.$\\
  \indent It is easy to show that $\nu(p)$ is bi-normal direction of $M$ at $p$ if $\det S^{\nu(p)}=0 $ i.e. either $k_1^{\nu}(p)=0$ or $k_2^{\nu}(p)=0.$ Such a point is called $\nu$-planar and a direction belonged to the kernel of $S^{\nu(p)}$ is said to be asymptotic. The normal field $\nu$ of $M$ is called bi-normal field if for each $p\in M,$ $\nu(p)$ is bi-normal direction of $M$ at $p.$ If there exists a bi-normal field on $M$ then $M$ is called pseudo-planar surface, in the case each normal field is bi-normal $M$ is called planar surface.  For everything concerning to these notions in more detail, we refer the reader to \cite{izu1}, \cite{izu3}, \cite{F1},  \cite{F2}, \cite{F3} and references therein.\\
\indent The existence of bi-normal direction on  surfaces in $\mathbb R^4$ has been studied by several authors (e.g. \cite{drei}, \cite{F1}, \cite{F2}, \cite{F3}, \cite{F5}, \cite{lit}, \cite{W}, \dots). Little (\cite{lit}, Theorem 1.3(b), 1969) showed that a surface whose all normal fields are bi-normal  if and only if it is a ruled developable surface. In 1995, D.K.H. Mochida et. al (\cite{F1}, Corollary 4.3 repeated at \cite{F5},  (2010)) showed that a surface admitting two bi-normal fields if and only if it is strictly locally convex. These results was expanded to surfaces of codimension two in $\mathbb R^{n+2}$ by them \cite{F2} in 1999. These methods are used later by  M.C. Romero-Fuster and F. S\'{a}nchez-Brigas (\cite{F3}, Theorem 3.4,  2002) to study the umbilicity of surfaces.\\
\indent The first section of this paper  shows that there exist pseudo-planar surfaces are not pseudo-umbilic, defines the number of bi-normal fields on the pseudo-umbilical surfaces and gives some interesting corollaries. \\
\indent In the second of this paper we show that  each point on the spacelike ruled surfaces admits either one or all bi-normal directions, a spacelike ruled surface is pseudo-umbilic iff umbilic.\\
\indent In the third section of this paper we show that the spacelike surfaces of revolution admit exactly two bi-normal fields whose asymptotic fields respectively are orthogonal. Therefore, they are pseudo-umbilic but not umbilic.\\
\indent The final section of this paper shows that the number of bi-normal fields on the rotational spacelike surface whose meridians lie in two-dimension space are depended on the properties of its meridian.
\section{Bi-normal Fields on Pseudo-umbilical Surfaces}
For the surfaces in $\mathbb R^4$ Romero Fuster \cite{F3} showed that pseudo-umbilical surfaces are pseudo-planar; moreover, their two asymptotic fields are orthogonal. These results are also true for spacelike surfaces in $\mathbb R_1^4,$ and I would like to show it here. Notice that there exist the pseudo-planar spacelike surfaces are not pseudo-umbilic, let see Example \ref{ex1} and Example \ref{ex2}. We have the  similar example for surfaces in $\mathbb R^4.$ \\
\indent The following theorem shows that the pseudo-umbilical spacelike surfaces are pseudo-planar  and gives us the number of bi-normal fields on them.
\begin{theorem}\label{theoum} Let $M$ be a spacelike surface in $\mathbb R_1^4.$ If $M$ is pseudo-umbilic (not pseudo-flat)  then it admits either one or two bi-normal fields. Moreover, $M$ admits only one bi-normal field iff it is umbilic.
\end{theorem}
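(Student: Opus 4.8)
The plan is to reduce the whole statement to linear algebra in the normal plane of $M$, which for a spacelike surface in $\mathbb R_1^4$ is a two-dimensional Lorentzian plane. Since $M$ is pseudo-umbilic, fix a unit normal field $\nu$ with $S^{\nu}=k\,\mathrm{Id}$; because $M$ is not pseudo-flat, $k\not\equiv 0$ (otherwise $M$ would be $\nu$-flat). Complete $\nu$ to an orthonormal normal frame $\{\nu,\xi\}$ — automatic locally, and since everything below is insensitive to the sign of $\xi$ one may pass to the double cover on which such a $\xi$ exists if need be. Every normal field is then $\mu=a\nu+b\xi$, and since $K^{\mu}$ is homogeneous of degree two in $\mu$, only the direction $[a:b]$ matters. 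The one fact driving the proof is the linearity of $\mu\mapsto S^{\mu}$:
\[
S^{\mu}=ak\,\mathrm{Id}+b\,S^{\xi},\qquad\text{hence}\qquad K^{\mu}=\det S^{\mu}=(ak+b\,k_1^{\xi})(ak+b\,k_2^{\xi}),
\]
where $k_1^{\xi},k_2^{\xi}$ are the $\xi$-principal curvatures obtained by diagonalising the self-adjoint operator $S^{\xi}$ (self-adjointness uses that the induced metric is positive definite).

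From this factorisation I would read off everything. The discriminant of $K^{\mu}=0$, viewed as a quadratic in $[a:b]$, equals $k^{2}(k_1^{\xi}-k_2^{\xi})^{2}\ge 0$, so each point carries one or two bi-normal directions; in particular $M$ is pseudo-planar. Moreover $\mu$ is a bi-normal field exactly when, at every point, $ak+b\,k_1^{\xi}=0$ or $ak+b\,k_2^{\xi}=0$, so the only candidates are the two normal line fields $\mu_{1},\mu_{2}$ spanned by $k_1^{\xi}\nu-k\,\xi$ and $k_2^{\xi}\nu-k\,\xi$ respectively; these are continuous because $\{k_1^{\xi},k_2^{\xi}\}=\{H^{\xi}\pm\sqrt{(H^{\xi})^{2}-K^{\xi}}\}$ is. Since at each point every bi-normal direction is $\mu_{1}(p)$ or $\mu_{2}(p)$, a continuity argument on connected $M$ forces any bi-normal field to coincide with $\mu_{1}$, or with $\mu_{2}$, throughout; hence $M$ has at most two bi-normal fields, and exactly two precisely when $\mu_{1}\neq\mu_{2}$. (As a side remark, the asymptotic direction of $\mu_{i}$ is the $k_i^{\xi}$-eigenspace of $S^{\xi}$, and eigenspaces of a self-adjoint operator for distinct eigenvalues are orthogonal — this recovers the orthogonality of the two asymptotic fields mentioned above.)

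It remains to pin down when $\mu_{1}=\mu_{2}$. On the set $\{k\neq 0\}$ the fields $\mu_{1}$ and $\mu_{2}$ point along the same line at a point iff $k_1^{\xi}=k_2^{\xi}$ there, so $\mu_{1}=\mu_{2}$ as fields iff $k_1^{\xi}\equiv k_2^{\xi}$, i.e. iff $S^{\xi}$ is a scalar operator everywhere, i.e. iff $M$ is $\xi$-umbilic. If $M$ is $\xi$-umbilic, then combined with $S^{\nu}=k\,\mathrm{Id}$ one gets $S^{\mu}=(ak+b\,k_1^{\xi})\,\mathrm{Id}$ for every $\mu=a\nu+b\xi$, so $M$ is $\mu$-umbilic for all $\mu$, i.e. umbilic; conversely an umbilic $M$ is $\xi$-umbilic, so $\mu_{1}=\mu_{2}$ and there is exactly one bi-normal field. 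This is the asserted dichotomy.

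The delicate point is the global bookkeeping, not any computation: controlling the zero set of $k$ (where $\mu_{1},\mu_{2}$ may degenerate — necessarily at planar points, where every direction is bi-normal) and the $\xi$-umbilic locus $\{k_1^{\xi}=k_2^{\xi}\}$ (where $\mu_{1}$ and $\mu_{2}$ merge, so a priori a bi-normal field might jump from one branch to the other), together with the global existence of the companion field $\xi$, i.e. orientability of the normal bundle. Under mild standing hypotheses — $M$ connected and, say, real-analytic, so these exceptional loci are nowhere dense with connected complement — all of this evaporates; otherwise one states the theorem locally, or rules out a spliced third bi-normal field across the umbilic locus by hand.
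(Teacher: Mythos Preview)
Your proof is correct and follows essentially the same route as the paper: expand an arbitrary normal field in a frame $\{\nu,\xi\}$ containing the umbilic direction, view $K^{\mathbf B}$ as a quadratic in the coefficients, and read off one or two bi-normal directions from the nonnegative discriminant, with equality exactly when the companion direction is also umbilic. Your execution is a little cleaner --- diagonalising $S^{\xi}$ factors the quadratic outright, and your observation that $\nu$-umbilic together with $\xi$-umbilic forces $S^{\mu}=(ak+bk_1^{\xi})\,\mathrm{Id}$ for every $\mu$ gives the umbilicity directly, whereas the paper cites an external lemma for that step --- and you are more careful than the paper about the global bookkeeping (continuity across the $\xi$-umbilic locus, zeros of $k$, orientability of the normal bundle), which the paper leaves implicit.
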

\begin{proof}
Suppose that $M$ is $\nu$-umbilic (not $\nu$-flat). Let ${\rm{\bf n}}$ be a normal field on $M$ such that $\{{\rm {\bf X}}_u,{\rm {\bf X}}_v,\nu,{\rm{\bf n}}\}$ is linearly independent and $k$ is $\nu$-principal curvature. Given a normal field  ${\rm{\bf B}},$ then we have the following interpretation
$${\rm{\bf B}}=\lambda \nu+\mu {\rm{\bf n}},$$
where $\lambda,\mu$ are smooth functions on $M.$ Suppose that the coefficients of the fist fundamental form of $M$ satisfy $$g_{11}=g_{22}=\varphi, g_{12}=0,$$ then we have
$$S^{\rm{\bf B}}=\lambda S^{\nu}+\mu S^{\rm{\bf n}}=\lambda k\left[\begin{matrix}1&0\\0&1\end{matrix}\right]+\frac{\mu}{\varphi} \left[\begin{matrix}b_{11}^{\rm {\bf n}}&b_{12}^{\rm {\bf n}}\\b_{12}^{\rm {\bf n}}&b_{22}^{\rm {\bf n}}\end{matrix}\right]= \left[\begin{matrix}\frac{\mu}{\varphi}  b_{11}^{\rm {\bf n}}+\lambda k&\frac{\mu}{\varphi}  b_{12}^{\rm {\bf n}}\\\frac{\mu}{\varphi}  b_{12}^{\rm {\bf n}}&\frac{\mu}{\varphi}  b_{22}^{\rm {\bf n}}+\lambda k\end{matrix}\right].$$
Therefore,
$$K^{\rm{\bf B}}=\gamma^2\left(b_{11}^{\rm {\bf n}}b_{22}^{\rm {\bf n}}-(b_{12}^{\rm {\bf n}})^2\right)+\lambda \gamma k\left(b_{11}^{\rm {\bf n}}+b_{22}^{\rm {\bf n}}\right)+\lambda^2k^2,  $$
\begin{equation}\label{umbinor}K^{\rm{\bf B}}=0\ \Leftrightarrow\ \gamma^2\left(b_{11}^{\rm {\bf n}}b_{22}^{\rm {\bf n}}-(b_{12}^{\rm {\bf n}})^2\right)+\lambda \gamma k\left(b_{11}^{\rm {\bf n}}+b_{22}^{\rm {\bf n}}\right)+\lambda^2k^2=0,\end{equation}
where $\gamma=\frac{\mu}{\varphi}.$
Since $\nu$ is not bi-normal,  $\mu\ne 0$. Then the equation (\ref{umbinor}) can be rewrote by
\begin{equation}\label{ptumbinor} \left(\frac{\lambda k}{\gamma} \right)^2+\left(b_{11}^{\rm {\bf n}}+b_{22}^{\rm {\bf n}}\right)\frac{\lambda k}{\gamma} +b_{11}^{\rm {\bf n}}b_{22}^{\rm {\bf n}}-(b_{12}^{\rm {\bf n}})^2=0. \end{equation}
It is from
\begin{equation}(b_{11}^{\rm {\bf n}}-b_{22}^{\rm {\bf n}})^2+4(b_{12}^{\rm {\bf n}})^2\geq0\end{equation}
that the equation (\ref{ptumbinor}) has at least one or at most two  solutions. That means $M$ admits at least one or  at most two bi-normal fields.\\
\indent $M$ admits only one bi-normal field if and only if $b_{11}^{\rm{\bf n}}=b_{22}^{\rm{\bf n}}$ and $b_{12}^{\rm{\bf n}}=0$. Which means that $M$ is ${\rm{\bf n}}$-umbilic. Then the Lemma 4.1 in \cite{C-H} shows that $M$ is umbilic.
\end{proof}
The following example gives a spacelike surface admitting one bi-normal field but not pseudo-umbilic.
\begin{ex}\label{ex1} Let $M$ be a surface given by following parameterization
\begin{equation}{\rm {\bf X}}(u,v)=\left(\cos u(1+v),\sin u(1+v),\sinh u,\cosh u\right),\ u,v\in\mathbb R.\end{equation}
The coefficients of the fist fundamental form of $M$ are determined by
$$g_{11}=\langle {\rm {\bf X}}_u,{\rm {\bf X}}_u  \rangle=v^2+2>0,\ g_{12}=\langle {\rm {\bf X}}_u,{\rm {\bf X}}_v  \rangle=0,\ g_{22}=\langle {\rm {\bf X}}_v,{\rm {\bf X}}_v  \rangle=1.$$
Therefore, $M$ is a spacelike surface.  Let ${\rm{\bf n}}=\left(n^1,n^2,n^3,n^4\right) $ be a normal field on $M.$ That means
\begin{equation}\label{tgXu}\langle {\rm {\bf X}}_u,{\rm{\bf n}}  \rangle=0\Leftrightarrow\ n^1\cos u+n^2\sin u=0,\end{equation}
\begin{equation}\label{tgXv}\langle {\rm {\bf X}}_v,{\rm{\bf n}}  \rangle=0\ \Leftrightarrow\ -n^1\sin u(1+v)+n^2\cos u(1+v)+n^3\cosh u-n^4\sinh u=0. \end{equation}
Using (\ref{tgXu}) the coefficients of the second fundamental form associated with ${\rm{\bf n}} $ are
$$b_{11}^{{\rm{\bf n}} }=n^3\sinh u-n^4\cosh u,\ b_{12}^{{\rm{\bf n}} }=-n_1\sin u+n_2\cos u,\ b_{22}^{{\rm{\bf n}} }=0.$$
We have
$$\det(b_{ij}^{\rm{\bf n}})=(b_{12}^{{\rm{\bf n}} })^2.$$
So,
\begin{equation}\label{Kn0}K^{{\rm{\bf n}} }=0\ \Leftrightarrow\ -n^1\sin u+n^2\cos u=0.\end{equation}
Connecting (\ref{tgXu}), (\ref{tgXv}) and (\ref{Kn0}) we imply that  ${\rm{\bf n}} $ is a bi-normal field on $M$ if and only if
$$n^1=n^2=0,\ n^3\cosh u-n^4\sinh u=0.$$
That means  $M$ admits only one bi-normal field $${\rm{\bf B}} =\left(0,0,\sinh u,\cosh u\right).$$
It is a unit timelike normal field. Since
$$S^{{\rm{\bf B}} }=\left[\begin{matrix}-1&0\\0&0\end{matrix}\right], $$
 $M$ is not ${\rm{\bf B}} $-flat.\\
On the other hand the ${\rm{\bf n}} $-principal curvatures are the solutions of the following equation
$$\det\left((b_{ij}^{{\rm{\bf n}} })-\lambda(g_{ij}) \right)=0\ \Leftrightarrow\ (v^2+2)\lambda^2-\lambda b_{11}^{{\rm{\bf n}} }-\left(b_{12}^{{\rm{\bf n}} }\right)^2=0.$$
Therefore, $M$ is ${\rm{\bf n}} $-umbilic if and only if $b_{11}^{{\rm{\bf n}} }=b_{12}^{{\rm{\bf n}} }=0.$ Which doesn't take place, by connecting (\ref{tgXu}), (\ref{tgXv}) and (\ref{Kn0}). So, $M$ is not pseudo-umbilic.
\end{ex}
Even when $M$ admits two bi-normal fields, it is not pseudo-umbilic. Let see the following example.
\begin{ex}\label{ex2} Let $M$ be a surface given by following parameterization
$${\rm {\bf X}}(u,v)=\left(e^{2u}\cos v,e^{2u}\sin v,e^{-u}\cosh v,e^{-u}\sinh v\right),\ u>1,\ v\in(0,2\pi).  $$
It is easy to show that $M$ is spacelike and $\left\{{\rm{\bf n}}_1,{\rm{\bf n}}_2  \right\} $ is a frame of the variable normal bundle, where
$${\rm{\bf n}}_1=-\frac{1}{\sqrt{g_{11}}}\left(e^{-u}\cos v,e^{-u}\sin v,2e^{2u}\cosh v, 2e^{2u}\sinh v\right),   $$
$${\rm{\bf n}}_2=\frac{1}{\sqrt{g_{22}}}\left(-e^{-u}\sin v,e^{-u}\cos v,e^{2u}\sinh v,e^{2u}\cosh v\right).$$
The coefficients of the second fundamental form associated with ${\rm{\bf n_1}}$ and ${\rm{\bf n_2}}$ are
$$b_{11}^{{\rm{\bf n}}_1}=-\frac{6e^u}{\sqrt{g_{11}}},\ b_{12}^{{\rm{\bf n}}_1 }=0,\ b_{22}^{{\rm{\bf n}}_1 }=-\frac{e^u}{\sqrt{g_{11}}};  $$
$$b_{11}^{{\rm{\bf n}}_2}=0,\ b_{12}^{{\rm{\bf n}}_2 }=\frac{3e^{u}}{\sqrt{g_{22}}} ,\ b_{22}^{{\rm{\bf n}}_2 }=0.  $$
 Therefore, both ${\rm{\bf n_1}}$ and ${\rm{\bf n_2}}$ are not bi-normal. Fore each normal field ${\rm{\bf n}} $ on $M$ we have the following interpretation
\begin{equation}\label{bieudienn}{\rm{\bf n}}={\rm{\bf n}}_1+\mu{\rm{\bf n}}_2\end{equation} and
$$\left(b_{ij}^{{\rm{\bf n}} }\right) =\left[\begin{matrix}b_{11}^{{\rm{\bf n}}_1 }&\mu b_{12}^{{\rm{\bf n}}_2 }\\ \mu b_{12}^{{\rm{\bf n}}_2 }&b_{22}^{{\rm{\bf n}}_1 }\end{matrix}\right].$$
So $$K^{{\rm{\bf n}} }=\frac{b_{11}^{{\rm{\bf n}}_1 }b_{22}^{{\rm{\bf n}}_1 }-\mu^{2}\left(b_{12}^{{\rm{\bf n}}_2 }\right)^2 }{g_{11}g_{22}}.$$
 Since $b_{11}^{{\rm{\bf n}}_1 }b_{22}^{{\rm{\bf n}}_1 }>0$ and $b_{12}^{{\rm{\bf n}}_2 }\ne0$, $M$ admits exactly two bi-normal fields.\\
\indent On the other hand the ${\rm{\bf n}}$-principal curvatures of $M$ are solutions of the following equation
\begin{equation}\label{gtr0rèn}\det\left((b_{ij}^{{\rm{\bf n}} })-\lambda (g_{ij})\right)=0\Leftrightarrow g_{11}g_{22})\lambda^2+\left(e^u\sqrt{g_{11}}+\frac{6e^ug_{22}}{\sqrt{g_{11}}} \right) \lambda +\frac{6e^{2u}}{g_{11}}-\frac{9e^{2u}\mu^2}{g_{22}}=0,\end{equation}
where $\lambda$ is the variable. Since
$$\frac{1}{g_{11}}\left[\left(2e^{4u}-7e^{-2u}\right)^2 \right]+36g_{11}\mu^2>0,\ \forall \mu,$$
for each normal field ${\rm{\bf n}},$ $M$ is not ${\rm{\bf n}}$-umbilic. It means that $M$ is not pseudo-umbilic.
\end{ex}
\begin{corollary} If $M$ is umbilic then $M$ is pseudo-flat.
\end{corollary}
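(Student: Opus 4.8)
The plan is to use the single structural fact behind umbilicity: on an umbilic surface every shape operator is pointwise a scalar multiple of the identity, so a well-chosen combination of two normal fields annihilates all the shape operators simultaneously, producing a flat — hence in particular bi-normal — field.

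First I would unwind the hypothesis. Since $M$ is $\nu$-umbilic for \emph{every} normal field $\nu$, and each $S^{\nu}$ is self-adjoint for the (positive definite, because $M$ is spacelike) induced metric, the equality $k_1^{\nu}=k_2^{\nu}$ forces $S^{\nu}=k^{\nu}\,\mathrm{Id}$, where $k^{\nu}$ is the common $\nu$-principal curvature, a smooth function on $M$ (no isothermal reduction is needed here, unlike in Theorem~\ref{theoum}). Now fix a local frame $\{\mathbf{n}_1,\mathbf{n}_2\}$ of the normal bundle, set $k_i=k^{\mathbf{n}_i}$, and use the tensoriality of the shape operator in its normal argument — the identity $S^{\lambda\nu+\mu\mathbf{n}}=\lambda S^{\nu}+\mu S^{\mathbf{n}}$ already exploited in the proof of Theorem~\ref{theoum} — to form
$$\mathbf{B}:=k_2\,\mathbf{n}_1-k_1\,\mathbf{n}_2,\qquad S^{\mathbf{B}}=k_2\,S^{\mathbf{n}_1}-k_1\,S^{\mathbf{n}_2}=(k_1k_2-k_1k_2)\,\mathrm{Id}=0.$$
Thus $k_1^{\mathbf{B}}=k_2^{\mathbf{B}}=0$ at every point, i.e.\ $M$ is $\mathbf{B}$-flat, and therefore pseudo-flat. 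Invariantly, umbilicity says $\mathrm{II}(X,Y)=\langle X,Y\rangle\,\mathbf{H}$ for the mean curvature vector $\mathbf{H}$, and $\mathbf{B}$ is precisely a normal direction conjugate to $\mathbf{H}$ in the Lorentzian normal plane — orthogonal to $\mathbf{H}$ when $\mathbf{H}$ is non-null, and proportional to $\mathbf{H}$ when $\mathbf{H}$ is lightlike; an alternative route to $S^{\mathbf{B}}\equiv 0$ is to observe that, on an umbilic surface, $K^{\mathbf{B}}\equiv 0$ already forces $\mathbf{B}$-umbilicity with $k^{\mathbf{B}}\equiv 0$ via Lemma~4.1 of \cite{C-H}, as in Theorem~\ref{theoum}.

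The only delicate point — and the main obstacle to packaging this as a one-liner — is whether $\mathbf{B}$ is a genuine normal \emph{field}, i.e.\ nowhere zero: $\mathbf{B}(p)=0$ exactly when $k_1(p)=k_2(p)=0$, equivalently when the full second fundamental form (equivalently $\mathbf{H}$) vanishes at $p$. On the open set where $\mathbf{H}\neq 0$ the field $\mathbf{B}$ works directly; if $\mathbf{H}\equiv 0$ then $M$ is totally geodesic and every normal field is flat. In the intermediate case one should argue that pseudo-flatness still holds near the zero set of $\mathbf{H}$ — by restricting to the open dense locus where $\mathbf{H}\neq 0$, or simply by noting that the conclusion $S^{\mathbf{B}}\equiv 0$ is unaffected by isolated zeros of $\mathbf{B}$, so the flat (hence bi-normal) direction persists through them. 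I would expect the author's proof to handle this essentially as in the theorem, without dwelling on the zero set.
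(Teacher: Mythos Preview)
Your argument is correct. The paper states this result as an unproved corollary of Theorem~\ref{theoum}; the intended one-line derivation is: assume $M$ umbilic but not pseudo-flat, so Theorem~\ref{theoum} applies and yields a bi-normal field $\mathbf{B}$; since $M$ is in particular $\mathbf{B}$-umbilic, $k_1^{\mathbf{B}}=k_2^{\mathbf{B}}$ together with $K^{\mathbf{B}}=k_1^{\mathbf{B}}k_2^{\mathbf{B}}=0$ forces both to vanish, so $M$ is $\mathbf{B}$-flat after all --- a contradiction. Your explicit field $\mathbf{B}=k_2\,\mathbf{n}_1-k_1\,\mathbf{n}_2$ is precisely the (double) root of the quadratic~(\ref{ptumbinor}) in the umbilic case, so you have extracted from the proof of Theorem~\ref{theoum} the piece that matters here and made it self-contained: you avoid the isothermal set-up and the discriminant analysis, and your invariant description via $\mathbf{H}$ is a nice bonus. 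The price is that you must handle the vanishing locus of $\mathbf{B}$ by hand, whereas the paper sidesteps this by building the hypothesis ``not pseudo-flat'' into Theorem~\ref{theoum} and reading the corollary off by contradiction.
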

\begin{corollary}
Let $M$ be a surface contained in the a pseudo-sphere (Hyperbolic  or de Sitter). Then the following statements are equivalent.
\begin{enumerate}
\item[(1)] $M$ is  umbilic,
\item[(2)] $M$ admits only one bi-normal field,
\item[(3)] $M$ is contained in a hyperplane.
\end{enumerate}
\end{corollary}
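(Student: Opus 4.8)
The plan is to read the equivalence $(1)\Leftrightarrow(2)$ off Theorem \ref{theoum} after checking that its hypotheses hold, and then to tie these conditions to $(3)$ through the classical description of totally umbilic hypersurfaces of a space form. Throughout, $M$ is a spacelike surface, as in the rest of this section, and $S$ denotes the pseudo-sphere containing it; after a translation, $S=\{\mathbf p:\langle\mathbf p,\mathbf p\rangle=\varepsilon\}$ with $\varepsilon=\pm1$ (hyperbolic or de Sitter). Restricting the position vector to $M$ gives a normal field $\mathbf X$ on $M$, and since the ambient derivative of the position vector along a tangent vector $Z$ is $Z$ itself, one gets $S^{\mathbf X}=c\,\mathrm{Id}$ with $c$ a nonzero constant. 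Hence $M$ is $\mathbf X$-umbilic with nonzero $\mathbf X$-principal curvature, so $M$ is pseudo-umbilic and not $\mathbf X$-flat, and Theorem \ref{theoum} applies: $M$ admits one or two bi-normal fields, and exactly one iff it is umbilic — which is precisely $(1)\Leftrightarrow(2)$.

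For $(1)\Leftrightarrow(3)$ I would complete $\mathbf X$ to a normal frame $\{\mathbf X,\mathbf N\}$ of $M$ in $\mathbb R_1^4$ with $\mathbf N$ tangent to $S$ along $M$. Because the ambient covariant derivative of $\mathbf N$ and its covariant derivative inside $S$ differ only by a multiple of $\mathbf X$, which is orthogonal to $TM$, the shape operator $S^{\mathbf N}$ of $M$ in $\mathbb R_1^4$ equals the shape operator of $M$, viewed as a hypersurface of $S$, with respect to $\mathbf N$. If $M$ is umbilic then it is $\mathbf N$-umbilic, so $S^{\mathbf N}=\mu\,\mathrm{Id}$ and $M$ is a totally umbilic hypersurface of the space form $S$; by the standard classification (Codazzi forces $\mu$ to be constant, and then $M\subset S\cap\Pi$ for an affine hyperplane $\Pi$), $M$ lies in a hyperplane. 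Conversely, if $M\subset\Pi$ then $M\subset S\cap\Pi$ is a totally umbilic hypersurface of $S$, so again $S^{\mathbf N}=\mu\,\mathrm{Id}$; combined with $S^{\mathbf X}=c\,\mathrm{Id}$ and the pointwise linearity of $\nu\mapsto S^{\nu}$, every normal field $a\mathbf X+b\mathbf N$ then has shape operator $(ac+b\mu)\,\mathrm{Id}$, so $M$ is $\nu$-umbilic for all $\nu$, i.e. umbilic.

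The main obstacle is this last geometric step: identifying $S^{\mathbf N}$ on $M\subset\mathbb R_1^4$ with the shape operator of $M$ inside $S$, and invoking the classification of totally umbilic hypersurfaces of a pseudo-sphere. One has to keep track of causal types — the hyperplane $\Pi$ may be timelike, lightlike or spacelike, so that $S\cap\Pi$ is a geodesic sphere, a horosphere or an equidistant surface in the hyperbolic case, or the analogous de Sitter families — but in every case the conclusion $M\subset\Pi$ persists. The remaining ingredients, namely the computation $S^{\mathbf X}=c\,\mathrm{Id}$ and the appeal to Theorem \ref{theoum}, are routine.
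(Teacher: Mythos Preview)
Your proof is correct and follows exactly the route the paper intends: the corollary is stated without proof because the position vector furnishes a $\nu$-umbilic, non-$\nu$-flat normal field, so Theorem~\ref{theoum} gives $(1)\Leftrightarrow(2)$, while $(1)\Leftrightarrow(3)$ is the known classification of umbilic spacelike surfaces of codimension two (cf.\ \cite{C-H}, \cite{izu3}) that you spell out via the intrinsic shape operator in $S$ and the totally umbilic hypersurface classification. The only addition on your side is making this last step explicit rather than deferring it to the references.
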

\begin{corollary} The following statements are equivalent.
\begin{enumerate}
\item[(1)] $M$ is locally  umbilic.
\item[(2)] $M$ is locally contained in the intersection of a Hyperbolic (or de Sitter) with a hyperplane.
\item[(3)] $M$ locally admits only one bi-normal field ${\rm{\bf B}} $ and $\nu$-umbilic (not $\nu$-flat), for some normal field $\nu.$
\end{enumerate}
\end{corollary}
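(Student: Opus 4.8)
The plan is to establish $(1)\Leftrightarrow(3)$ directly from Theorem~\ref{theoum}, the easy implication $(2)\Rightarrow(1)$ by a short computation, and the substantial implication $(1)\Rightarrow(2)$ from the structure theory of totally umbilical spacelike surfaces. For $(1)\Leftrightarrow(3)$ I would observe that the proof of Theorem~\ref{theoum} uses only that $M$ is $\nu$-umbilic (and not $\nu$-flat) for \emph{some} normal field $\nu$, which is precisely the second requirement appearing in $(3)$; under it $M$ carries either one or two bi-normal fields, and it carries only one exactly when it is umbilic. Hence $(3)$ is equivalent to the conjunction ``$M$ umbilic'' and ``$M$ is $\nu$-umbilic-but-not-$\nu$-flat for some $\nu$''. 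The second conjunct is automatic for an umbilic surface unless $M$ is a piece of a totally geodesic spacelike plane — which satisfies neither $(2)$ nor $(3)$ — so $(1)$ must be read with that degenerate case excluded, and with this reading $(1)\Leftrightarrow(3)$ is immediate.

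For $(2)\Rightarrow(1)$, suppose $M$ is locally contained in $\Sigma\cap\Pi$ with $\Sigma=\{x:\langle x-c,x-c\rangle=\varepsilon\}$ a pseudo-sphere and $\Pi$ a hyperplane; then $M$ is open in the surface $\Sigma\cap\Pi$, whose normal plane at each point is spanned by the constant conormal $e$ of $\Pi$ and by the radial field $\mathbf x-c$. One computes $S^{e}=0$ (the $e$-height function is constant on $\Pi$) and $S^{\mathbf x-c}=-\operatorname{id}$, so every normal field $a\,e+b(\mathbf x-c)$ has shape operator $-b\operatorname{id}$, a multiple of the identity; thus $M$ is $\nu$-umbilic for all $\nu$, i.e. umbilic, and it is $(\mathbf x-c)$-umbilic but not $(\mathbf x-c)$-flat, so $(1)$ holds non-degenerately. (Alternatively, since $M$ then lies in the pseudo-sphere $\Sigma$, one may invoke the preceding corollary on surfaces in a pseudo-sphere to read off $(1)$ and $(3)$ at once.)

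The heart of the matter is $(1)\Rightarrow(2)$. Umbilicity means the second fundamental form is $\alpha(X,Y)=\langle X,Y\rangle\mathbf H$ with $\mathbf H$ the mean curvature vector. Because $\dim M=2$ and the induced metric is positive definite, the Codazzi equation reduces to $\langle Y,Z\rangle\nabla^{\perp}_X\mathbf H=\langle X,Z\rangle\nabla^{\perp}_Y\mathbf H$, which forces $\nabla^{\perp}\mathbf H=0$; consequently $\nabla_X\mathbf H=-S^{\mathbf H}X=-\langle\mathbf H,\mathbf H\rangle X$, and the Ricci equation shows the normal connection is flat (the shape operators are all scalar, hence commute), yielding a local parallel normal frame in which $\mathbf H$ has constant coefficients. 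If $\langle\mathbf H,\mathbf H\rangle\neq0$, then $c:=\mathbf x+\mathbf H/\langle\mathbf H,\mathbf H\rangle$ is a constant point and $\langle\mathbf x-c,\mathbf x-c\rangle=1/\langle\mathbf H,\mathbf H\rangle$ is constant, so $M$ lies in a pseudo-sphere (hyperbolic if this constant is negative, de Sitter if positive); moreover a parallel unit normal $e\perp\mathbf H$ has $S^{e}=0$, hence is a constant vector, so $\langle\mathbf x,e\rangle$ is constant and $M$ lies in the hyperplane $\{\langle\,\cdot\,,e\rangle=\text{const}\}$, giving $M\subset\Sigma\cap\Pi$. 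If $\mathbf H$ is lightlike and nonzero, the parallel frame lets one choose a constant normal $\nu$ with $\langle\mathbf H,\nu\rangle=-1$ and $\langle\nu,\nu\rangle=-1$; then $\mathbf x-\nu$ is a constant point, $\langle\mathbf x-\nu,\mathbf x-\nu\rangle=-1$, and $\langle\mathbf x,\mathbf H\rangle$ is constant, so $M$ again lies in a hyperbolic-type pseudo-sphere intersected with a (now null) hyperplane.

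The main obstacle, apart from carrying the Lorentzian signs correctly through the Codazzi and Ricci computations, is the handling of the degenerate tail of $(1)\Rightarrow(2)$: one must confirm that the lightlike-$\mathbf H$ case genuinely fits the form in $(2)$ by permitting a null hyperplane, and one must acknowledge that $\mathbf H\equiv0$ (a flat spacelike plane) is a genuine exception to the stated equivalence — which is exactly why $(1)$ has to be understood as ``locally umbilic and not totally geodesic'', equivalently as in the non-degenerate reading used for $(1)\Leftrightarrow(3)$.
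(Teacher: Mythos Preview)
The paper states this corollary without proof; the intended argument is to read $(1)\Leftrightarrow(3)$ off Theorem~\ref{theoum} and to obtain $(1)\Leftrightarrow(2)$ from the classification of umbilic spacelike surfaces of codimension two in \cite{C-H} together with the preceding corollary on surfaces contained in a pseudo-sphere. Your proposal uses the same route for $(1)\Leftrightarrow(3)$ and for $(2)\Rightarrow(1)$, and those parts are correct; your observation that the totally geodesic case ($\mathbf H\equiv 0$) must be excluded from $(1)$ is also the right reading of the statement.

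Where you diverge from the paper is in $(1)\Rightarrow(2)$: instead of invoking \cite{C-H}, you rebuild the classification in place via Codazzi (to get $\nabla^{\perp}\mathbf H=0$), Ricci (flat normal bundle, hence a parallel normal frame), and then the standard center-of-sphere computation, with a separate treatment of the lightlike $\mathbf H$ case. This is a genuinely more self-contained route and it is essentially correct. One small wording slip: in the lightlike case the parallel timelike normal $\nu$ you choose is \emph{not} itself a constant vector of $\mathbb R^4_1$, since $S^{\nu}=\langle\mathbf H,\nu\rangle\,\mathrm{id}=-\mathrm{id}\neq 0$; what is constant is $\mathbf x-\nu$, exactly as you go on to use. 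With that cosmetic correction your argument goes through, and it has the advantage over the paper's approach of not depending on the external reference; the paper's approach, in turn, is shorter once \cite{C-H} is granted.
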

\begin{corollary}
If $M$ admits only one  bi-normal field ${\rm{\bf B}} $ (not ${\rm{\bf B}} $-flat) then it is not pseudo-umbilic.
\end{corollary}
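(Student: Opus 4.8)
The plan is to argue by contradiction and reduce the statement to Theorem~\ref{theoum} and the first corollary above (``umbilic $\Rightarrow$ pseudo-flat''). I would assume, for contradiction, that $M$ is pseudo-umbilic, and derive a contradiction using only the hypotheses that $M$ carries a single bi-normal field $\mathbf{B}$ and that $M$ is not $\mathbf{B}$-flat.

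First I would record that being a bi-normal field is a property of the normal \emph{line} field: the shape operator is linear in the normal vector (as already used in the proof of Theorem~\ref{theoum}, where $S^{\mathbf{B}}=\lambda S^{\nu}+\mu S^{\mathbf{n}}$), so $S^{c\nu}=cS^{\nu}$ and hence $K^{c\nu}=c^{2}K^{\nu}$ for every nowhere-zero smooth function $c$; in particular $K^{c\nu}$ vanishes identically exactly when $K^{\nu}$ does. Thus ``$M$ admits only one bi-normal field $\mathbf{B}$'' means that, up to pointwise rescaling, $\mathbf{B}$ is the unique normal direction along which $K\equiv 0$.

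Next I would show that $M$ is not pseudo-flat. If it were, say $M$ is $\eta$-flat for some normal field $\eta$, then $S^{\eta}\equiv 0$, so $K^{\eta}=\det S^{\eta}\equiv 0$ and $\eta$ is a bi-normal field; by the uniqueness just noted, $\eta$ and $\mathbf{B}$ span the same normal line field, so $\mathbf{B}=h\eta$ with $h$ nowhere zero, and therefore $S^{\mathbf{B}}=hS^{\eta}\equiv 0$, i.e.\ $M$ is $\mathbf{B}$-flat --- contradicting the hypothesis. Hence $M$ is pseudo-umbilic and not pseudo-flat, which is exactly the setting of Theorem~\ref{theoum}: such an $M$ admits one or two bi-normal fields, with exactly one iff $M$ is umbilic. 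Since $M$ has only one bi-normal field, $M$ is umbilic; but then the first corollary above gives that $M$ is pseudo-flat, contradicting what we just established. Therefore $M$ is not pseudo-umbilic.

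The proof is short once Theorem~\ref{theoum} is available; the one point I would treat carefully is the flatness bookkeeping --- verifying that ``not $\mathbf{B}$-flat'', together with uniqueness of the bi-normal direction, genuinely excludes $M$ being pseudo-flat, so that the ``(not pseudo-flat)'' hypothesis of Theorem~\ref{theoum} is legitimately at our disposal.
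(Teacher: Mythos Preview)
Your argument is correct and is exactly the intended route: the paper states this corollary without proof, and the natural derivation is precisely the contradiction you give---use the ``not $\mathbf{B}$-flat'' hypothesis together with uniqueness of the bi-normal direction to rule out pseudo-flatness, then invoke Theorem~\ref{theoum} to conclude umbilicity, and finally apply the earlier corollary (umbilic $\Rightarrow$ pseudo-flat) to reach a contradiction. The care you take in checking that ``only one bi-normal field'' is a statement about the normal line field (so that any flat direction would have to coincide with $\mathbf{B}$) is appropriate and fills in the only point that is not completely automatic.
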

\begin{remark}\label{equivalent} The results in \cite{F3} are also true for the spacelike surfaces in $\mathbb R_1^4.$ So that the following statements are equivalent:
 \begin{enumerate}
 \item[(1)] $M$ has two everywhere defined orthogonal asymptotic fields,
 \item[(2)] $M$ is pseudo-umbilic,
 \item[(3)] The normal curvature of $M$ vanishes at every point,
 \item[(4)] All points of $M$ are semi-umbilic.
 \end{enumerate}
\end{remark}
\section{Bi-normal Fields on Ruled Spacelike Surface in $\mathbb R_1^4$}
The notion of ruled surface in $\mathbb R^4$ have been introduced by Lane in \cite{lane}. It is similar to ruled (spacelike) surface in $\mathbb R_1^4$ and can be introduced by the similar way. A surface $M$ in $\mathbb R_1^4$ is called ruled if through every point of $M$ there is a straight line that lies on $M.$ We have a local parameterization of $M$
\begin{equation}\label{rulled}{\rm{\bf X}} (u,t)=\alpha(t)+ uW(t),\ t\in I\subset \mathbb R, u\in \mathbb R,\end{equation}
where $\alpha(t)$ is a differential curve in $\mathbb R_1^4$ and $W(t)$ is a smooth vector field along  $\alpha(t).$\\
\indent A ruled surface $M$ is called developable if its Gaussian curvature identifies zero.\\
\indent It is from  ${\rm{\bf X}}_u=W(t), {\rm{\bf X}}_t(0,t)=\alpha'(t)$ and $M$ is spacelike that both $W(t)$ and $\alpha'(t)$ are spacelike.  We can assume that $|W|=|\alpha'|=1$ and $\langle W,\alpha'\rangle=0.$\\
\indent The coefficients of the first fundamental form of $M$ are
$$g_{11}=\langle {{\textbf  X}}_u,{{\textbf  X}}_u\rangle=\langle \alpha',\alpha'\rangle+2t\langle \alpha',W'\rangle+t^2\langle W',W'\rangle,$$
$$g_{12}=\langle {{\textbf  X}}_u,{{\textbf  X}}_t\rangle=0,\ g_{22}=\langle {{\textbf  X}}_t,{{\textbf  X}}_t\rangle=1.$$
Since $M$ is spacelike, $\langle W',W'\rangle>0.$\\
\indent Let ${\rm{\bf n}}$ be a normal field on $M,$ the coefficients of the second fundamental form associated ${\rm{\bf n}}$ are defined as following
$$b_{11}^{\textbf  n}=\langle {{\textbf  X}}_{uu},\textbf  n\rangle=\langle \alpha'',\textbf  n\rangle+t\langle W'',\textbf  n\rangle,\  b_{12}^{\textbf  n}=\langle {{\textbf  X}}_{ut},\textbf  n\rangle=\langle W',\textbf  n\rangle,$$$$ b_{22}^{\textbf  n}=\langle {{\textbf  X}}_{tt},\textbf  n \rangle=0.$$
Therefore,
\begin{equation}\label{An}S^{\textbf  n}=(g_{ij})^{-1}.(b_{ij}^{\textbf n})=\frac{1}{g_{11}}\left[\begin{matrix}b_{11}^{\textbf n}&b_{12}^{\textbf n}\\b_{12}^{\textbf n}g_{11}&0\end{matrix}\right],\ K^{\textbf  n}=-\frac{(b_{12}^{\textbf n})^2}{g_{11}}.\end{equation}
So, \begin{equation}\label{rubinor}K^{\textbf n}=0\ \Leftrightarrow\ b_{12}^{\textbf n}\ \Leftrightarrow\ \langle W',\textbf n\rangle=0.\end{equation}
The following proposition gives us the number of bi-normal directions at each point on a ruled surface.
\begin{proposition}\label{birul} Let $M$ be a ruled spacelike surface given by (\ref{rulled}), we then have:
\begin{enumerate}
\item at the point such that $\left\{\alpha',W,W'\right\}$ is linearly dependent each normal vector is bi-normal direction;
\item at the point such that $\left\{\alpha',W,W'\right\}$ is linearly independent $M$ admits only one bi-normal direction.
\item $M$ is pseudo-umbilic if and only if umbilic.
\end{enumerate}
\end{proposition}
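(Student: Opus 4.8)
The plan is to analyze the formula (\ref{An}) for $S^{\textbf n}$ pointwise and exploit the condition (\ref{rubinor}), which says $\nu$ is bi-normal at a point exactly when $\langle W', \nu\rangle = 0$. Fix a point $(u_0, t_0)$ and work in the normal plane $N_p M$, which is a $2$-dimensional Lorentzian (or at worst degenerate, but here nondegenerate since $M$ is spacelike and the ambient is $\mathbb R_1^4$) vector space. The key observation is that the bi-normal directions at $p$ are precisely the normal directions $\nu$ orthogonal to the projection $W'^\perp$ of $W'$ onto $N_pM$. So everything hinges on whether $W'^\perp = 0$ or not.

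First I would prove parts (1) and (2) together. The vector $W'(t_0)$ decomposes as a tangential part (a combination of $\alpha'$ and $W$, since $\langle W', W\rangle = 0$ automatically from $|W|=1$, so really just the $\alpha'$-component plus possibly more, but the tangent plane is spanned by $\alpha', W$) plus a normal part $W'^\perp$. The condition that $W'^\perp = 0$ is exactly the condition that $W' \in \mathrm{span}\{\alpha', W\}$, i.e. that $\{\alpha', W, W'\}$ is linearly dependent. In that case $\langle W', \nu\rangle = \langle W'^\perp, \nu\rangle = 0$ for every normal $\nu$, so every normal direction is bi-normal — this is (1). If instead $\{\alpha', W, W'\}$ is linearly independent, then $W'^\perp \neq 0$, and the equation $\langle W'^\perp, \nu\rangle = 0$ defines a line in the $2$-plane $N_pM$; one must check that this line is nondegenerate as a direction, but since $N_pM$ is Lorentzian the orthogonal complement of any nonzero vector is a $1$-dimensional subspace, hence exactly one bi-normal direction up to scaling — this is (2). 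I should be a little careful about the case where $W'^\perp$ is null, because then $(W'^\perp)^\perp$ contains $W'^\perp$ itself; but the conclusion "exactly one bi-normal direction (line)" still holds, so (2) is unaffected.

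For part (3), one direction is trivial: umbilic implies pseudo-umbilic by definition. For the converse, suppose $M$ is pseudo-umbilic, say $\nu$-umbilic and not $\nu$-flat. I would first show this forces $\{\alpha', W, W'\}$ to be linearly dependent everywhere: indeed if at some point the triple were independent, then by (\ref{An}) the matrix $S^\nu = \frac{1}{g_{11}}\left[\begin{matrix}b_{11}^\nu & b_{12}^\nu\\ b_{12}^\nu g_{11} & 0\end{matrix}\right]$ would need to be a scalar multiple of the identity. The $(2,2)$-entry is $0$, so $\nu$-umbilicity forces the scalar to be $0$, i.e. $b_{11}^\nu = 0$ and $b_{12}^\nu = 0$, hence $\nu$-flat — contradiction. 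Therefore $\{\alpha', W, W'\}$ is linearly dependent everywhere, so by part (1) every normal field is bi-normal; in particular $M$ is a (ruled, developable) planar surface. Then for any normal field $\textbf B$, formula (\ref{An}) with $b_{12}^{\textbf B} = 0$ gives $S^{\textbf B} = \frac{1}{g_{11}}\left[\begin{matrix}b_{11}^{\textbf B} & 0\\ 0 & 0\end{matrix}\right]$, which is $\textbf B$-umbilic iff $b_{11}^{\textbf B} = 0$. That is the delicate point: I must rule out the possibility that some normal field fails to be umbilic. Here I would invoke that $\nu$-umbilicity (not $\nu$-flat) means $b_{11}^\nu \neq 0$ while $b_{11}^\nu$ as a function together with the two-dimensionality of the normal bundle... actually the cleanest route is again Lemma 4.1 of \cite{C-H}, exactly as in the proof of Theorem \ref{theoum}: once we know $M$ is $\nu$-umbilic for the given $\nu$ and also $\textbf n$-umbilic for a complementary normal field $\textbf n$ — which we can arrange by choosing $\textbf n$ with $\langle W', \textbf n\rangle = 0$ (possible since $W'^\perp = 0$ everywhere) and noting $S^{\textbf n}$ is then forced to be a multiple of the identity only if $b_{11}^{\textbf n}=0$, but we instead just observe every $\textbf B$ with $b_{11}^{\textbf B}=0$ is umbilic and every $\textbf B$ with $b_{11}^{\textbf B} \neq 0$ has $S^{\textbf B}$ with distinct eigenvalues $b_{11}^{\textbf B}/g_{11}$ and $0$...

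The genuine obstacle is closing the converse in (3): showing pseudo-umbilic forces umbilic rather than just "planar". I expect the right argument is: from the linear dependence of $\{\alpha', W, W'\}$ we get $W' = a\alpha' + bW$ for smooth functions $a, b$; differentiating $|W'|^2 = \langle W', W'\rangle > 0$ and using the structure equations one finds $\alpha''$ and $W''$ also lie in a controlled subspace, so that the full second fundamental form has rank at most one in a single direction independent of the chosen normal, which is precisely the statement that $M$ is $\textbf B$-umbilic for every $\textbf B$ with $b_{11}^{\textbf B}=0$ and — crucially — that there are no normal fields with $b_{11}^{\textbf B}\neq 0$ except those proportional to a single fixed $\nu_0$, and along that one exceptional line $S^{\textbf B}$ having eigenvalues $\{b_{11}^{\textbf B}/g_{11}, 0\}$ is still not umbilic unless... so in fact $M$ is umbilic only after quotienting — meaning I must show $b_{11}^{\textbf B} = 0$ identically for the relevant $\textbf B$, which should follow from $M$ being $\nu$-umbilic-not-$\nu$-flat combined with $M$ being planar via Lemma 4.1 of \cite{C-H}, giving the conclusion. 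So the proof of (3) reduces to: (i) pseudo-umbilic $\Rightarrow$ $\{\alpha',W,W'\}$ dependent everywhere $\Rightarrow$ planar; (ii) planar and $\nu$-umbilic-not-$\nu$-flat $\Rightarrow$ umbilic by Lemma 4.1 of \cite{C-H}.
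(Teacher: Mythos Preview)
Your treatment of parts (1) and (2) is correct and essentially identical to the paper's: you phrase it via the normal projection $W'^\perp$ while the paper writes down the cross product $\alpha'\wedge W\wedge W'$, but these are the same idea.

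For part (3) the paper does something much shorter and quite different from your plan: it simply cites Theorem~\ref{theoum}. Since by part (2) a ruled surface admits only one bi-normal direction at each (generic) point, it cannot admit two distinct bi-normal fields; hence, by the ``moreover'' clause of Theorem~\ref{theoum}, pseudo-umbilic forces umbilic. That is the entire argument in the paper.

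Your direct route for (3) has a genuine gap. Your computation that the $(2,2)$-entry of $S^{\nu}$ vanishes shows, correctly, that $\nu$-umbilic forces $k=0$, i.e.\ $\nu$-flat, at \emph{every} point --- but note this uses nothing about the linear independence of $\{\alpha',W,W'\}$. So the contradiction you obtain is with your added hypothesis ``not $\nu$-flat'', not with independence of the triple; you have actually proved that for ruled surfaces pseudo-umbilic $\Rightarrow$ pseudo-flat, and you have \emph{not} proved that $\{\alpha',W,W'\}$ becomes dependent. This then breaks your step (ii): Lemma~4.1 of \cite{C-H} (as used in Theorem~\ref{theoum}) needs the non-flat hypothesis, which your own step (i) has just eliminated. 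To complete your route you would still have to show directly that a $\nu$-flat ruled spacelike surface is umbilic, and nothing in your outline does that. The paper sidesteps all of this by appealing to Theorem~\ref{theoum}.
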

\begin{proof}\
\begin{enumerate}
\item Since
$$\langle \alpha',W\rangle=0,\  \langle W',W\rangle=0   $$
and $\left\{\alpha',W,W'\right\}$ is linearly dependent, $W'\in T_pM.$ Therefore, by using (\ref{rubinor}), we imply that each normal vector on $M$ is bi-normal direction.
\item Since
$$K{\textbf  n}=0\ \Leftrightarrow\ \left\{ \begin{aligned}&\langle \textbf  n,{{\rm{\bf X}}}_u \rangle=0,\\&\langle \textbf  n,{{\rm{\bf X}}}_v\rangle=0,\\ &\langle \textbf  n,W'\rangle=0,  \end{aligned}\right.\ \Leftrightarrow\ \left\{ \begin{aligned}&\langle \textbf  n,\alpha' \rangle=0,\\&\langle \textbf  n,W\rangle=0,\\ &\langle \textbf  n,W'\rangle=0, \end{aligned}\right. $$
$\textbf n$ is an unit bi-normal direction if and only if
$$\textbf n=\frac{\alpha'\wedge W\wedge W'}{|\alpha'\wedge W\wedge W'|}.$$
It is followed from the fact that $\alpha',W,W'$ are spacelike that the unique unit bi-normal direction on $M$ is timelike.
\item Since $M$ admits only one bi-normal field, it is followed Theorem \ref{theoum} that $M$ is pseudo-umbilic iff umbilic.
\end{enumerate}
\end{proof}
\begin{remark}\begin{enumerate}
\item The Proposition \ref{birul} is also true for the ruled surfaces in $\mathbb R^4.$
\item  Using the Gauss equation we can show that the Gaussian curvature of a spacelike surface in $\mathbb R_1^4$ can be defined by sum of $K^{e_1}$  and $K^{e_2},$ where $\{e_1,e_2 \}$ is an orthogonal frame of normal bundle of surface. Therefore, a ruled spacelike surface is developable iff $\left\{\alpha',W,W'\right\}$ is linearly dependent.
\item Similarly the results on the surfaces in $\mathbb R^4$ (see \cite{lit}), it is easy to show that if a spacelike surface $M$ is planar and the  causal character of its ellipse curvature (see \cite{izu1}) is invariant then $M$ is a ruled developable surface.
\end{enumerate}
\end{remark}
Lane \cite{lane} showed that if a ruled surface in $\mathbb R^4$ is minimal then it is contained in a hyperplane and of course it is either plane or helicoid. We have the same results for the maximal ruled  spacelike surfaces in $\mathbb R_1^4.$ That means a ruled spacelike surface in $\mathbb R_1^4$ is maximal if and only if it is maximal in a spacelike hyperplane.
\section{Bi-normal Fields on Spacelike Surfaces of Revolution}
Let $C$ be a spacelike  curve in $\text{span}\{e_1,e_2,e_4\}$ parametrized by  arc-length,
$$z(u)=\left(f(u),g(u),0,\rho(u)\right),\ \rho(u)>0,\ \ u\in I.$$
The orbit of $C$ under the action of the orthogonal transformations of $\mathbb R_1^4$ leaving the spacelike plane $Oxy,$
$$A_S=\left[\begin{matrix}1&0&0&0\\0&1&0&0\\0&0&\cosh v&\sinh v\\0&0&\sinh v&\cosh v\end{matrix}\right],\ v\in\mathbb R,$$
is a  surface given by
\begin{equation}\label{hr1} {\rm[RH]}\qquad
{\rm{\bf X}}(u,v)=\left(f(u),g(u),\rho(u)\sinh v, \rho(u)\cosh v\right),\ u\in I,\ v\in\mathbb R.
\end{equation}
The coefficients of the first fundamental form of [RH] are
$$g_{11}=(f'(u))^2+(g'(u))^2-(\rho'(u))^2=1,\  g_{12}=0,\ g_{22}=\left(\rho(u)\right)^2>0. $$
It follows that [RH] is a spacelike surface, which is called  the {\it spacelike surface of revolution of hyperbolic type} in $\mathbb R_1^4$. From now on we always assume that $f'\ne0,g'\ne0$ and $\rho'\ne 0$.
\begin{proposition} Suppose that $f'g''-f''g'\ne0,$ we then have:
\begin{enumerate}
\item[(a)] [RH] admits exactly two bi-normal fields and its asymptotic fields  are  orthogonal,
\item[(b)] There exists only one normal field $\nu$ satisfying that [RH] is $\nu$-umbilic.
\end{enumerate}
\end{proposition}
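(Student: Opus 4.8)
The plan is to set up an explicit orthogonal normal frame for [RH] and then reduce everything to two scalar computations, exactly in the spirit of the proof of Theorem~\ref{theoum}. First I would record the tangent frame $\mathbf{X}_u=(f',g',\rho'\sinh v,\rho'\cosh v)$, $\mathbf{X}_v=(0,0,\rho\cosh v,\rho\sinh v)$, and note that the arc-length relation $(f')^2+(g')^2-(\rho')^2=1$ together with its derivative $f'f''+g'g''=\rho'\rho''$ will be used repeatedly. Then I would exhibit the normal frame consisting of the spacelike field $\mathbf{n}_1=(g',-f',0,0)$ and the timelike field $\mathbf{n}_2=(\rho'f',\rho'g',(1+(\rho')^2)\sinh v,(1+(\rho')^2)\cosh v)$; verifying $\langle\mathbf{n}_i,\mathbf{X}_u\rangle=\langle\mathbf{n}_i,\mathbf{X}_v\rangle=\langle\mathbf{n}_1,\mathbf{n}_2\rangle=0$ is precisely where the arc-length identity enters, and one computes $\langle\mathbf{n}_1,\mathbf{n}_1\rangle=1+(\rho')^2>0$ and $\langle\mathbf{n}_2,\mathbf{n}_2\rangle=-(1+(\rho')^2)<0$.

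Next I would compute the second fundamental forms from $\mathbf{X}_{uu}$, $\mathbf{X}_{uv}$, $\mathbf{X}_{vv}$, again using the two derived identities; both matrices turn out to be diagonal, $(b^{\mathbf{n}_1}_{ij})=\mathrm{diag}(-D,0)$ with $D:=f'g''-f''g'\neq 0$, and $(b^{\mathbf{n}_2}_{ij})=\mathrm{diag}(-\rho'',-\rho(1+(\rho')^2))$. Since $(g_{ij})=\mathrm{diag}(1,\rho^2)$, for an arbitrary normal field $\mathbf{B}=\lambda\mathbf{n}_1+\mu\mathbf{n}_2$ the shape operator $S^{\mathbf{B}}=\lambda S^{\mathbf{n}_1}+\mu S^{\mathbf{n}_2}$ is diagonal, $S^{\mathbf{B}}=\mathrm{diag}\bigl(-\lambda D-\mu\rho'',\,-\mu(1+(\rho')^2)/\rho\bigr)$, whence $K^{\mathbf{B}}=\tfrac{1+(\rho')^2}{\rho}\,\mu(\lambda D+\mu\rho'')$.

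For part (a): $K^{\mathbf{B}}=0$ is the union of the two linear conditions $\mu=0$ and $\lambda D+\mu\rho''=0$; since $D\neq 0$ these define two distinct lines in the normal bundle, so [RH] admits exactly two bi-normal fields, namely $\mathbf{B}_1=\mathbf{n}_1$ and $\mathbf{B}_2=-\rho''\mathbf{n}_1+D\mathbf{n}_2$ (both smooth and nowhere zero because $D\neq 0$). Reading off the kernels of the diagonal shape operators gives $\ker S^{\mathbf{B}_1}=\langle\mathbf{X}_v\rangle$ and $\ker S^{\mathbf{B}_2}=\langle\mathbf{X}_u\rangle$, and these asymptotic fields are orthogonal since $g_{12}=0$. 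For part (b): $M$ is $\mathbf{B}$-umbilic iff the two diagonal entries of $S^{\mathbf{B}}$ coincide, i.e. $-\lambda D-\mu\rho''=-\mu(1+(\rho')^2)/\rho$; because $D\neq 0$ this pins the ratio $[\lambda:\mu]=[\,1+(\rho')^2-\rho\rho''\,:\,\rho D\,]$ uniquely, giving the unique normal field $\nu\propto(1+(\rho')^2-\rho\rho'')\mathbf{n}_1+\rho D\,\mathbf{n}_2$, which is nowhere zero and has common principal curvature $-D(1+(\rho')^2)\neq 0$ (so in particular [RH] is pseudo-umbilic but not pseudo-flat, consistent with Theorem~\ref{theoum}).

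The only genuinely non-mechanical step is guessing the frame $\{\mathbf{n}_1,\mathbf{n}_2\}$ — in particular the weights $\rho'$ and $1+(\rho')^2$ in $\mathbf{n}_2$, which are forced by the arc-length constraint; once that frame is in hand everything else is bookkeeping with $g_{12}=0$ and the diagonal matrices. The one place to stay alert is that the hypothesis $f'g''-f''g'\neq 0$ is used essentially twice: to keep the two bi-normal directions distinct in (a), and to make the umbilic ratio well-defined in (b).
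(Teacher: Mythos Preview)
Your argument is correct and rests on the same key observation as the paper's proof: for \emph{every} normal field on [RH] one has $b_{12}=0$, so the shape operator is diagonal in the $\{\mathbf{X}_u,\mathbf{X}_v\}$ basis and $K^{\mathbf{B}}$ factors as a product of two linear forms in the normal-frame coefficients. The only real difference is the choice of normal frame. The paper works with the arbitrary normal $(n_1,n_2,n_3,n_4)$, deduces $b_{12}^{\mathbf n}=0$ directly from the orthogonality conditions, and then identifies the two bi-normal fields as $\mathbf{B}_1=\mathbf{X}_u\wedge\mathbf{X}_v\wedge\mathbf{X}_{uu}$ (the direction with $b_{11}=0$) and $\mathbf{B}_2=(-g',f',0,0)$ (the direction with $b_{22}=0$); for (b) it simply exhibits $\nu=\mathbf{B}_1-\mathbf{B}_2$ after computing $S^{\mathbf{B}_1}=\mathrm{diag}(0,-D)$ and $S^{\mathbf{B}_2}=\mathrm{diag}(D,0)$, then appeals to Remark~\ref{equivalent} for the orthogonality of asymptotic fields. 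You instead build an explicit orthogonal frame $\{\mathbf{n}_1,\mathbf{n}_2\}$ (this is exactly the strategy the paper itself adopts in the next section for [RS]) and parametrize $\mathbf{B}=\lambda\mathbf{n}_1+\mu\mathbf{n}_2$; this buys you a clean direct verification of the orthogonality of the asymptotic fields and, more notably, an explicit proof of \emph{uniqueness} in (b), which the paper's argument leaves implicit. Your $\mathbf{n}_1$ is the paper's $\mathbf{B}_2$ up to sign, while your second bi-normal $\mathbf{B}_2=-\rho''\mathbf{n}_1+D\mathbf{n}_2$ is (up to scale) the paper's $\mathbf{B}_1$.
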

\begin{proof}
(a) Let $\textbf  n=(n_1,n_2,n_3,n_4)$ be a normal field on $M$, we have
$$\langle {\rm{\bf X}}_u,\textbf  n\rangle=0,\ \langle {\rm{\bf X}}_v,\textbf  n\rangle=0.$$
That means
\begin{equation}\label{b12=0}
n_1f'+n_2g'+n_3\rho'\sinh v-n_4\rho'\cosh v=0,\ \rho(n_3\cosh v-n_4\sinh v)=0.
\end{equation}
Since (\ref{b12=0}),
$$b_{12}^{\textbf  n}=\langle {{\rm{\bf X}}}_{uv},\textbf  n\rangle=\rho'\left(n_3\cosh v-n_4\sinh v\right)=0.$$
Therefore,
\begin{equation}\label{AofR}
\det\left(S^{\textbf  n}\right)=\frac{b_{11}^{\textbf  n}.b_{22}^{\textbf  n}}{\rho^2},
\end{equation}
where $b_{ij}^{\textbf n}$ are the coefficients of the second fundamental form associated with $\textbf n$ of [RH].\\
\indent On the other hand we have
$$\langle {{\rm{\bf X}}}_u,{{\rm{\bf X}}}_u\rangle=1\ \Rightarrow\ \langle {{\rm{\bf X}}}_{uu},{{\rm{\bf X}}}_u\rangle=0,  $$
$$\langle {{\rm{\bf X}}}_u,{{\rm{\bf X}}}_v\rangle=0\ \Rightarrow\ \langle {{\rm{\bf X}}}_{uu},{{\rm{\bf X}}}_v\rangle=-\langle {{\rm{\bf X}}}_{u},{{\rm{\bf X}}}_{uv}\rangle=0.   $$
So, $\{{{\rm{\bf X}}}_u,{{\rm{\bf X}}}_v,{{\rm{\bf X}}}_{uu}\}$ is linearly independent. Therefore, $b_{11}^{\textbf  n}=0$ if and only if $\textbf  n$ is parallel to
$${\rm{\bf B}}_1={{\rm{\bf X}}}_u\wedge {{\rm{\bf X}}}_v\wedge {{\rm{\bf X}}}_{uu}.$$
It is easy to show that $b_{22}^{\textbf  n}=0$ if and only if $\textbf  n$ is parallel to ${{\rm{\bf B}}_2}=(-g',f',0,0).$ ${\rm{\bf X}}_v $ then is asymptotic field associated with ${\rm{\bf B}}_2 .$\\
\indent Since $f'g''-f''g'\ne0,$ ${\rm{\bf B}}_1,{\rm{\bf B}}_2$ are linearly independent. So,  [RH] admits exactly two bi-normal fields. \\
\indent (b) Using base $\{{\rm{\bf X}}_u,{\rm{\bf X}}_v\}$ for tangent planes of [RH], we have
$$S^{{\rm{\bf B}}_1}=\left[\begin{matrix}0&0\\0&-f'g''+f''g'\end{matrix}\right],\ S^{{\rm{\bf B}}_2}=\left[\begin{matrix}f'g''-f''g'&0\\0&0\end{matrix}\right].$$
Therefore, [RH] is $\nu$-umbilic, where $\nu={{\rm{\bf B}}_1}-{{\rm{\bf B}}_2}.$ Remark \ref{equivalent} shows that the normal curvature of [RH] identifies zero, [RH] has two orthogonal asymptotic fields everywhere, and [RH] is semi-umbilic.
\end{proof}
\begin{remark}\
\begin{enumerate}
\item[(a)] If $f'g''-f''g'=0$ then $M$ is contained in a hyperplane.
\item[(b)] It is similar to the spacelike surfaces of revolution of elliptic type.
\end{enumerate}
\end{remark}
\section{Bi-normal Fields on The Rotational Spacelike Surfaces Whose Meridians Lie in Two-dimension planes}
 Romero Fuster et. al \cite{F2} showed that there always an open region of a generic, compact $2$-manifold in $\mathbb R^4$ all whose points admit at least one bi-normal direction and at most n of them. This result is not true in the general case. This section gives a class of spacelike surfaces whose points can admit non, one, two or infine bi-normal directions. It is similar to them on $\mathbb R^4.$\\
\indent Let $C$ be a spacelike curve contained in $\text{span}\{e_1,e_3\}$ and parametrized by
$$r(u)=\left(f(u),0,g(u),0\right),\ u\in I,$$
and
$$A=\left[\begin{matrix}\cos\alpha v&-\sin\alpha v&0&0\\\sin\alpha v&\cos\alpha v&0&0\\0&0&\cosh\beta v&\sinh\beta v\\0&0&\sinh\beta v&\cosh\beta v\end{matrix}\right],\ v\in[0,2\pi) ,$$
such that
$$\alpha^2f^2(u)-\beta^2g^2(u)>0,$$
be a orthogonal transformations of $\mathbb R_1^4$, where  $u\in J\subset \mathbb R$ and $\alpha,\beta$ are positive constants.\\
\indent The orbit of $C$ under the action of the orthogonal transformations $A$
is a  surface [RS] given by
\begin{equation}\label{gesurrevoone}
{\rm{\bf X}} (u,v)=\left(f(u)
\cos\alpha v,f(u)\sin\alpha v,g(u)\cosh\beta v,g(u)\sinh\beta v\right).
\end{equation}
The coefficients of the first fundamental form of [RS] are
$$g_{11}=(f')^2+(g')^2>0,\ g_{12}=0,\ g_{22}=\alpha^2f^2-\beta^2g^2>0.$$
That means [RS] is  spacelike. It is called {\it  rotational spacelike surface whose meridians lie in two-dimension planes of type I}.\\
Choosing $\{{\rm{\bf n}}_1,{\rm{\bf n}}_2\} $ is an orthonormal frame field on [RS], where
$${\rm{\bf n}}_1=\frac{1}{\sqrt{(f')^2+(g')^2}}\left(g'\cos\alpha v,g'\sin\alpha v,-f'\cosh\beta v,-f'\sinh\beta v\right),  $$
$${\rm{\bf n}}_2=\frac{1}{\sqrt{\alpha^2f^2-\beta^2g^2}}\left(-\beta g\sin\alpha v,\beta g\cos\alpha v,\alpha f\sinh \beta v,\alpha f\cosh\beta v\right),$$
then the coefficients of the second fundamental form associated to ${\rm{\bf n}}_1$ and ${\rm{\bf n}}_2$ are defined by
$$b_{11}^{{\rm{\bf n}}_1}=\frac{f''g'-f'g''}{\sqrt{(f')^2+(g')^2}},\  b_{12}^{{\rm{\bf n}}_1}=0,\ b_{22}^{{\rm{\bf n}}_1}=-\frac{\beta^2f'g+\alpha^2fg'}{\sqrt{(f')^2+(g')^2}},$$
$$b_{11}^{{\rm{\bf n}}_2}=0,\  b_{12}^{{\rm{\bf n}}_2}=\frac{\alpha\beta(f'g-fg')}{\sqrt{\alpha^2f^2-\beta^2g^2}} ,\ b_{22}^{{\rm{\bf n}}_2}=0,$$
 respectively.\\
Let ${\rm{\bf B}}$ be a normal field on [RS], we have
$${\rm{\bf B}}=\lambda {\rm{\bf n}}_1+\mu {\rm{\bf n}}_2,$$
where $\lambda,\mu$ are smooth functions on [RS]. Then we have
$$(b_{ij}^{\rm{\bf B}})=\lambda (b_{ij}^{{\rm{\bf n}}_1})+\mu (b_{ij}^{{\rm{\bf n}}_2})=\left[\begin{matrix}\lambda b_{11}^{{\rm{\bf n}}_1}&\mu b_{12}^{{\rm{\bf n}}_2}\\\mu b_{12}^{{\rm{\bf n}}_2}&\lambda b_{22}^{{\rm{\bf n}}_1}\end{matrix}\right].$$
So,
$$K^{\rm{\bf B}}=\frac{\lambda^2b_{11}^{{\rm{\bf n}}_1}.b_{22}^{{\rm{\bf n}}_1}-\mu^2(b_{12}^{{\rm{\bf n}}_2})^2}{\left((f')^2+(g')^2\right)\left(\alpha^2f^2-\beta^2g^2\right)} .$$
Therefore,
\begin{enumerate}
\item[(a)] $\textbf  n_2$ is bi-normal if and only if $f=cg$, where $c$ is constant satisfying $\alpha^2-c\beta^2>0$. Then $b_{22}^{\textbf  n_1}=0$. So, $\textbf  n_1$ is also bi-normal. And it is easy to show that [RS] is a planar. That means [RS] is planar if and only if $C$ is a line passing through the origin.
\item[(b)] $\textbf  n_1$ is bi-normal if and only if either
\begin{equation}\label{onebi}\begin{aligned}&f=cg+c_1\ \text{or}\  \alpha^2fg'+\beta^2f'g=0,\end{aligned}\end{equation}
where $c,c_1$ are constant. In this case, if $c_1\ne0$ then [RS] admits only one bi-normal field that is $\textbf  n_1.$ Therefore, [RS] admits only one bi-normal field if and only $\textbf n_1 $ is bi-normal and $\textbf  n_2$ is not bi-normal. Which takes place if and only if (\ref{onebi}) is true and $c_1\ne0.$ For example
$${\rm{\bf X}}(u,v)=\left(u\cos v,u\sin v,\cosh v,\sinh v\right),\ u>1,\ v\in(0,2\pi).$$
\item[(c)] [RS] does not admit any bi-normal field if and only if
$$-(f''g'-f'g'')(\beta^2f'g+\alpha^2fg')<0\ \text{and}\ \alpha\beta(f'g-g'f)\ne0.$$
For example
$${\rm{\bf X}}(u,v)=\left(u^2\cos v,u^2\sin v,u\cosh v,u\sinh v\right),\ u>1,\ v\in(0,2\pi).$$
\item[(d)] [RS] admits exactly two bi-normal fields if and only if
$$-(f''g'-f'g'')(\beta^2f'g+\alpha^2fg')>0\ \text{and}\ \alpha\beta(f'g-g'f)\ne0.$$
For example
$${\rm{\bf X}}(u,v)=\left(e^{2u}\cos v,e^{2u}\sin v,e^{-u}\cosh v,e^{-u}\sinh v\right),\ u>1,\ v\in(0,2\pi).$$
\end{enumerate}
It is similar to the rotational spacelike surfaces whose meridians lie in two-dimension planes of type II. This result is also true for the rotational surfaces whose meridians lie in two-dimension planes in $\mathbb R^4.$

\bigskip

\address{ Dang Van Cuong\\
Department of Natural Sciences \\
Duy Tan University \\
Danang \\
Vietnam
}
{dvcuong@duytan.edu.vn}

\begin{thebibliography}{99}
\bibitem{arslan}\textsc{K. Arslan, B. Bayram, B. Bulca and G. ${\rm\ddot{O}}$zt${\rm \ddot{u}}$rk}, General rotation surfaces in $\mathbb E^4$, Results. Math., (2012), DOI 10.1007/s00025-011-0103-3.
\bibitem{C-H}\textsc{D. V. Cuong and D. Th. Hieu}, $HS_r$-valued Gauss maps and umbilic spacelike sufaces of codimension two, arXiv: $11022527v2$ [math.DG], e-prints, (2011).
\bibitem{C}\textsc{D. V. Cuong}, $LS_{\lowercase {r}}$-valued Gauss maps and spacelike surfaces of revolution in $\mathbb R_1^4$,  App. Math. Sci. 6 (77) (2012), 3845 - 3860..
\bibitem{drei}\textsc{D. Dreibelbis}, Singularities of the Gauss map and the binormal surface,
Adv. Geom. 3 (2003), 453-468.
\bibitem{dursun1}\textsc{U. Dursun, N. C. Turgay}, Minimal and Pseudo-Umbilical Rotational Surfaces in Euclidean Space $\mathbb E^4$, Mediterr. J. Math, (2012), DOI 10.1007/s00009-011-0167-z.
\bibitem{hoos}  \textsc{D. A. Hoffman and R. Osserman}, The Gauss map of surfaces in $R\sp{n}$, J. Differential Geom. 18 (1983), no. 4, 733-754.
\bibitem{izu2} \textsc{S. Izumiya, D-H. Pei and T. Sano}, Singularities of hyperbolic Gauss maps. Proceedings of the London Mathematical Society (86), (2003) 485-512.
\bibitem{izu3} \textsc{S. Izumiya, D. Pei and M.C. Romero-Fuster},  Umbilicity of spacelike submanifolds of Minkowski space, Proceedings of the Royal Society of Edinburgh (134A), (2004) 375-387.
 \bibitem{izu1} \textsc{Izumiya. S, Pei. D  and Romero Fuster. M. C}, The lightcone Gauss map of a spacelike surface in Minkowski $4$-space,   Asian J. Math., {\bf 8} (3), (2004) 511-530.
\bibitem{izu5} \textsc{S. Izumiya, and M. C. Romero-Fuster}, The lightlike flat geometry on spacelike submanifolds of codimension two in Minkowski space, Selecta Math. (N.S.) 13 (2007), no. 1, 23-55.
\bibitem{izu6}	\textsc{S. Izumiya, J. J. Nuno Ballesteros  and M.C. Romero-Fuster},  Global properties on spacelike submanifolds of codimension two in Minkowski space, Advances in Geometry 10 (2010), 51-75.
\bibitem{ko}  \textsc{M. Kossowski}, The $S^2$-valued Gauss maps and split total curvature of a spacelike codimension-2 surface in Minkowski space, J. London Math. Soc. (2) 40 (1989) 179-192.
\bibitem{lane} \textsc{E. P. Lane},  Projective differential geometry of curves and surfaces, University of Chicago Press,  (1932).
\bibitem{lit} \textsc{J. A. Little}, On singularities of submanifolds of higher dimensional Euclidean pace. Ann. Mat. PuraAppl. (Ser 4A) 83 (1969), 261-336.
\bibitem{sheva}  \textsc{V. Milosheva}, General rotational surfaces in $\mathbb R^4$ with meridians lying in two-dimension planes,C. R. Acad. Bulg. Sci., 63, 3, (2010) 339-348.
\bibitem{F1}\textsc{D. K. H. Mochida, M.C. Romero Fuster, M.A. S. Ruas}, The Geometry of Surfaces in 4-Space from a Contact Viewpoins, Geom. Dedicata 54 (1995), 323-332.
\bibitem{F2}\textsc{D. K. H. Mochida, M.C. Romero Fuster, M.A. S. Ruas}, Osculating hyperplanes and asymptotic directions of codimension 2-submanifolds of Euclidean spaces, Geom, Dedicata (1999) 305-312.
\bibitem{R}\textsc{A. Ranum}, On the differential geometry of ruled surfaces in 4-space and cyclic surfaces in 3-space, Presented to the Society, (1915).
\bibitem{F3}\textsc{M.C. Romero Fuster, F. S¸nchez-Bringas}, Umbilicity of surfaces with orthogonal asymptotic lines in $\mathbb R^4$, Journal of Diff. Geom. and Appl. 16 (2002), 213-334.
\bibitem{RL} \textsc{R. L$\acute{\rm o}$pez}, Diffirential Geomety of Curves and Surfaces  in Lorentz-Minkowski space, Universidad de Granada, (2008).

\bibitem{F5}\textsc{J. J. Nu${\rm\tilde{n}}$o-Ballesteros, M.C. Romero Fuster}, Contact properties  of codimension two submanifolds with flat normal bundle, Rev. Math. Iberoamericana. 26 (2010), no 3, 799-824.
\bibitem{W}\textsc{J. Weiner}, The Gauss map for surfaces in 4-space, Math. Ann, 269 (1984), 541-560.
%=================================================================


\end{thebibliography}
\end{document}